\newtheorem{theorem}{Theorem}[section]
\newtheorem{proposition}[theorem]{Proposition}
\newtheorem{remark}[theorem]{Remark}
\newtheorem{question}[theorem]{Question}
\newtheorem{definition}[theorem]{Definition}
\newtheorem{supplement}[theorem]{Supplement}
\numberwithin{equation}{section}
\begin{document}  
\title[Limits of manifolds in the Gromov-Hausdorff metric space]
{Limits of manifolds in the Gromov-Hausdorff metric space} 
\author[F. Hegenbarth and D.D. Repov\v{s}]{ 
Friedrich Hegenbarth and Du\v{s}an D. Repov\v{s}}
\address{Dipartimento di Matematica "Federigo Enriques", 
Universit\` a degli studi di Milano, 
20133 Milano, Italy}
\email{friedrich.hegenbarth@unimi.it} 
\address{Faculty of Education and Faculty of Mathematics and Physics, University of Ljubljana \&
Institute of Mathematics, Physics and Mechanics, 
1000 Ljubljana, Slovenia}
\email{dusan.repovs@guest.arnes.si}
\begin{abstract}  
We apply the Gromov-Hausdorff metric $d_G$ for  characterization of certain generalized manifolds.
Previously, we have proved that with respect to the metric $d_G,$ generalized $n$-manifolds are limits of spaces which are
obtained by gluing two topological $n$-manifolds by a controlled homotopy equivalence (the so-called $2$-patch spaces).
In the present paper, we consider the so-called {\sl manifold-like} generalized $n$-manifolds $X^{n},$ introduced in 1966
 by Marde\v{s}i\'{c}
and
Segal, which are characterized by
the
 existence of $\delta$-mappings $f_{\delta}$
 of $X^n$
onto closed manifolds $M^{n}_{\delta},$ for arbitrary small $\delta>0$, i.e.  
there exist
 onto maps $f_{\delta}\colon X^{n}\to M^{n}_{\delta}$
such that for every $u\in M^{n}_{\delta}$, $f^{-1}_{\delta}(u)$ has diameter less than $\delta$. 
We prove that  with respect to
the   metric $d_G,$ manifold-like generalized $n$-manifolds $X^{n}$
are limits of topological $n$-manifolds $M^{n}_{i}$.
Moreover, if  topological $n$-manifolds  $M^{n}_{i}$  satisfy a certain local contractibility condition $\mathcal{M}(\varrho, n)$, we prove that
generalized $n$-manifold  $X^{n}$ is resolvable.
\end{abstract}
\subjclass[2020]{Primary 
53C23,
55R20,
57P10,
57R65
57R67;
Secondary 
55M05,
55N99, 
57P05,
57P99}
\keywords{Gromov-Hausdorff metric,
Gromov topological moduli space,
manifold-like generalized manifold,
absolute neighborhood retract,
cell-like map, $\delta$-map,
structure map,
controlled surgery sequence, 
$\varepsilon$-homotopy equivalence,
$2$-patch space,
periodic surgery spectrum $\mathbb{L}$.}
\maketitle
\section{Introduction}\label{s1} 
This paper is a continuation of our systematic  study of the characterization problem for generalized $n$-manifolds,
$n\ge 5,$
(see 
Cavicchioli {\it et al.}~\cite{CHR, CHR16}
  and
Hegenbarth and Repov\v{s}~\cite{HeRe06,HeRe14,HeRe18,HeRe19,HeRe20,HeRe21}).
This is a very important class of spaces which in the algebraic sense
 strongly resemble topological manifolds, whereas in the geometric sense they can fail to be locally Euclidean at any point
(see e.g.,
Cannon~\cite{C},
Edwards~\cite{ Edw1978}, and
 Repov\v{s}~\cite{W,B,H}).
\begin{definition}

A  \textit{generalized $n$-manifold} $X^{n}$ is an $n$-dimensional metric absolute neighborhood retract 
(ANR) $X^{n}$ with
local homology 
\[
H_{*}(X^{n},X^{n}\setminus\{x\};\mathbb Z)\cong
H_{*}(\mathbb  R^{n},\mathbb R^{n}\setminus\{0\};\mathbb Z), \  \mbox{for every} \  x\in X.
\]
\end{definition}
We shall only consider {\it oriented} generalized $n$-manifolds {\it without boundary}
(i.e. $H_{n}(X^{n},X^{n}\setminus\{x\};\mathbb Z)\cong\mathbb Z,$ for every $x \in X^{n}$).
 Throughout the paper, we shall be assuming that $n\geq 5.$
 
\begin{definition}
Given  any $\delta >0,$ a  continuous map
 $f_{\delta}\colon X\to Y$ of a metric space $X$ onto a topological space $Y$ is 
 called a \textit{$\delta$-map} if   for every point $y\in Y,$ the preimage $f_{\delta}^{-1}(y)$ has diameter $<\delta$. 
\end{definition}
 More than half a century ago, Marde\v si\'c and Segal~\cite[Theorem 1]{MaSeg67}
 proved the following very nice characterization  result for generalized manifolds in terms of $\delta$-maps.
 
 \begin{theorem}\label{MS}
 Let $X^n$ be a compact  $n$-dimensional metric ANR such that for every $\delta>0,$
 there exists a $\delta$-map $f_{\delta}\colon X^n\to M^n_{\delta}$ of $X^{n}$ onto some (triangulated)
 oriented closed 
 topological $n$-manifold $M^{n}_{\delta}$. Then $X^{n}$ is a generalized $n$-manifold.
 \end{theorem}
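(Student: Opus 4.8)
The plan is to verify the only missing condition in the definition of a generalized $n$-manifold, namely the local homology isomorphism, since $X^n$ is already assumed to be a compact $n$-dimensional metric ANR. Being an ANR, $X^n$ is locally contractible, so its singular and \v{C}ech (co)homology agree on $X^n$, on its open subsets, and on the relevant pairs; this is what will let me invoke the continuity of \v{C}ech theory later. By excision, the local homology $H_*(X^n,X^n\setminus\{x\})$ may be computed inside any open neighborhood of $x$, a fact I will use repeatedly.

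Fix $x\in X^n$ and $\varepsilon>0$. Given $\delta<\varepsilon$, write $u=f_\delta(x)$ and $K=f_\delta^{-1}(u)$; note $x\in K$ and $\operatorname{diam}K<\delta<\varepsilon$, so $K\subseteq B(x,\varepsilon)$. Since $f_\delta$ is a closed surjection, $V:=M^n_\delta\setminus f_\delta(X^n\setminus B(x,\varepsilon))$ is an open neighborhood of $u$ with $f_\delta^{-1}(V)\subseteq B(x,\varepsilon)$. Shrinking $V$ to a coordinate ball $V'\cong\mathbb R^n$ about $u$, I obtain an arbitrarily small open neighborhood $U:=f_\delta^{-1}(V')\subseteq B(x,\varepsilon)$ of $x$ together with a proper $\delta$-map $f_\delta|_U\colon U\to V'\cong\mathbb R^n$ carrying the fiber $K\ni x$ to the centre $u$. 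Because $V'$ sits in the oriented manifold $M^n_\delta$, one has $H_*(V',V'\setminus\{u\})\cong H_*(\mathbb R^n,\mathbb R^n\setminus\{0\})$, the target group.

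The crux is to show that for $\delta$ small this $\delta$-map induces an isomorphism $H_*(U,U\setminus K)\xrightarrow{\ \cong\ }H_*(V',V'\setminus\{u\})$; combined with excision $H_*(X^n,X^n\setminus K)\cong H_*(U,U\setminus K)$, this pins down the homology of $X^n$ relative to the small fiber $K$. The mechanism I would use is that a $\delta$-map of a compactum onto an ANR, once $\delta$ is below the modulus of local contractibility of $V'$, admits small homotopy-inverse data (built from a partition of unity and the nerve of a fine cover, using the ANR property of $U$ and $V'$); this makes $f_\delta|_U$ a fine homotopy equivalence of pairs and hence a homology isomorphism. I expect this ``smallness implies isomorphism'' step to be the main obstacle, precisely because the fibers are only known to be small and not cell-like or acyclic, so Vietoris--Begle does not apply directly and the control must instead come from the ANR structure together with the continuity of \v{C}ech cohomology.

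Finally I would pass to the limit $\delta\to0$. Choosing $\varepsilon_i\to0$ and $\delta_i<\varepsilon_i$ produces fibers $K_i\ni x$ with $\operatorname{diam}K_i<\delta_i\to0$, hence $\bigcap_i K_i=\{x\}$, and for each $i$ an isomorphism $H_*(X^n,X^n\setminus K_i)\cong H_*(\mathbb R^n,\mathbb R^n\setminus\{0\})$. Since the $K_i$ are compacta shrinking to $x$ inside the locally contractible space $X^n$, the continuity of \v{C}ech (co)homology identifies the limit of the groups $H_*(X^n,X^n\setminus K_i)$ with the genuine local homology $H_*(X^n,X^n\setminus\{x\})$; as every group in the system is already the standard manifold local homology, so is the limit. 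This gives $H_*(X^n,X^n\setminus\{x\})\cong H_*(\mathbb R^n,\mathbb R^n\setminus\{0\})$ for every $x$, whence $X^n$ is a generalized $n$-manifold.
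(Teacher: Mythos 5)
The paper itself contains no proof of Theorem \ref{MS}: it quotes the result from Marde\v{s}i\'{c} and Segal \cite{MaSeg67}. So your proposal has to be measured against their original argument, and against it your plan has genuine gaps at exactly the two steps you yourself flagged as the main points.

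First, the crux step fails. The claim that a $\delta$-map onto an ANR is a fine homotopy equivalence ``once $\delta$ is below the modulus of local contractibility of $V'$'' is both unproved and circular. The tool that actually exists (Eilenberg's theorem, the same one the paper invokes in Proposition \ref{L}) is one-sided: for $\delta$ small depending only on the \emph{source} ANR it produces $g$ with $g\circ f_\delta$ $\varepsilon$-homotopic to the identity, i.e.\ a homotopy \emph{domination}, not an equivalence. Upgrading a domination to an equivalence needs extra input: the paper does it in Supplement \ref{supp} via Poincar\'{e} duality on $X^{n}$ --- circular here, since duality on $X^{n}$ is precisely what Theorem \ref{MS} is supposed to deliver --- and in Theorem \ref{B} via the uniform hypothesis $M^{n}_{\delta}\in\mathcal{M}(\varrho,n)$. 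Theorem \ref{MS} imposes no bound at all on the geometry of $M^{n}_{\delta}$: the $\delta$ your argument needs depends on the chart $V'\subset M^{n}_{\delta}$, but $M^{n}_{\delta}$ is only handed to you \emph{after} $\delta$ is chosen, and its local contractibility may degenerate as $\delta\to 0$. Note also that the absolute statement is vacuous ($V'$ is contractible); all the content is in $U\setminus K\to V'\setminus\{u\}$, where ``metrically close maps are homotopic avoiding $u$'' is false without geometric control, because the homeomorphism $V'\cong\mathbb{R}^{n}$ can distort the metric arbitrarily. And, as you concede, small fibers need not be acyclic (a fiber can be a tiny circle), so no Vietoris--Begle mechanism can substitute for the missing control.

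Second, the limit step fails. The fibers $K_i$ are small but \emph{thin}: they are not nested and contain no neighborhood of $x$. Consequently the groups $H_*(X^{n},X^{n}\setminus K_i)$ carry no bonding maps and form no direct system, and continuity of \v{C}ech theory --- which applies to nested (inverse) systems of compacta --- says nothing about them. The valid statement $H_*(X^{n},X^{n}\setminus\{x\})=\varinjlim H_*(X^{n},X^{n}\setminus C)$ requires a directed family of compacta $C$ shrinking to $x$ (e.g.\ nested balls), and the $K_i$ cannot be interleaved with balls precisely because no ball about $x$ fits inside a fiber; knowing each group $H_*(X^{n},X^{n}\setminus K_i)$ separately, without the maps of a directed system, does not determine the colimit. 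This is exactly where Marde\v{s}i\'{c} and Segal use the hypothesis your argument never touches, namely that the $M^{n}_{\delta}$ are triangulated: by their earlier structure theorem, a compactum admitting $\delta$-maps onto members of a class $\mathcal{P}$ of polyhedra for all $\delta>0$ is the inverse limit of an inverse sequence of members of $\mathcal{P}$ with onto bonding maps. The bonding maps supply the directed systems, continuity of \v{C}ech cohomology applies to the inverse limit, Poincar\'{e} duality inside each approximating manifold (this is where orientability enters) computes the local groups, and the ANR hypothesis identifies \v{C}ech with singular theory. Both of your key steps would need essentially this machinery; as written, neither goes through.
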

 
 \begin{definition}\label{strmap}
 Marde\v si\'c and Segal called
 such a generalized $n$-manifold
  $X^{n}$ 
   \textit{manifold-like}.
 We shall call such maps
$f_{\delta}\colon X^n\to M^n_{\delta}$
 {\it structure maps}.
 \end{definition} 

\begin{remark}
Since every topological $n$-manifold (except for nonsmooth\-able 4-ma\-ni\-folds), admits a handlebody decomposition (see Quinn
{\rm{\cite{Qu82}}}), we shall hereafter neglect "triangulated".
\end{remark}

Let $d_G$ be the {\it Gromov-Hausdorff distance} which  is a complete metric on the set of
all
 isometry classes of
compact metric spaces. (Details will be given in Section \ref{s2},
 for an overview see Ferry \cite[\S 29]{Fe92}.)
In our previous paper Hegenbarth-Repov\v{s}~\cite[\S 4.3]{HeRe18}, we proved that with respect to
metric
 $d_G$,
every generalized $n$-manifold $X^{n}$
is the limit  of $2$-patch spaces, defined by
Bryant {\sl et al.} \cite{BFMW2007}.

In this paper we shall prove the following new characterization  result for manifold-like generalized $n$-manifolds - an approximation by topological $n$-manifolds  in terms of
the Gromov-Hausdorff metric $d_G$.

\begin{theorem}[Approximation Theorem]\label{A}
For every manifold-like generalized $n$-manifold $X^{n}$ and every 
$\delta>0,$ there exists a topological $n$-manifold $M^{n}_\delta$ such that $d_G(X^{n},M^{n}_\delta)<\delta$.
\end{theorem}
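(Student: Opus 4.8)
The plan is to realize the structure map as an approximate isometry and to read off the Gromov--Hausdorff estimate from the characterization of $d_G$ by correspondences: $2\,d_G(A,B)=\inf_R\operatorname{dis}(R)$, where $R$ ranges over correspondences $R\subseteq A\times B$ and $\operatorname{dis}(R)=\sup\{|d_A(a,a')-d_B(b,b')|:(a,b),(a',b')\in R\}$. Given $\delta>0$, I would first invoke the manifold-like hypothesis to fix a structure map $f:=f_{\delta'}\colon X^{n}\to M^{n}$ onto a closed topological $n$-manifold with mesh $\delta'<\delta$ (the precise ratio to be pinned down at the end). The graph $R_f=\{(x,f(x)):x\in X^{n}\}$ is a correspondence because $f$ is onto, and its distortion is governed entirely by how faithfully a chosen metric on $M^{n}$ reproduces the $X$-distances between fibres. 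The whole argument therefore reduces to equipping the manifold $M^{n}$ with a metric $\rho$ for which $\rho(f(x),f(x'))$ stays within $O(\delta')$ of $d_X(x,x')$.

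The forward estimate is routine once $\rho$ is chosen. Writing $\operatorname{gap}(u,v)=d_X\bigl(f^{-1}(u),f^{-1}(v)\bigr)$, the $\delta'$-map property (every fibre has diameter $<\delta'$) gives, for all $x\in f^{-1}(u)$ and $x'\in f^{-1}(v)$, the two-sided bound $|d_X(x,x')-\operatorname{gap}(u,v)|<\delta'$; hence any metric $\rho$ on $M^{n}$ with $|\rho-\operatorname{gap}|<\delta'$ forces $\operatorname{dis}(R_f)<2\delta'$, whence $d_G\bigl(X^{n},(M^{n},\rho)\bigr)<\delta'<\delta$. Equivalently, if $f$ admits a continuous homotopy inverse $g\colon M^{n}\to X^{n}$ that is an embedding with $g\circ f$ a $\delta'$-homotopy of $\mathrm{id}_{X^{n}}$ (so $d_X(gf(x),x)<\delta'$), one may simply set $\rho(u,v)=d_X(g(u),g(v))$; then $(M^{n},\rho)$ is isometric to the topological copy $g(M^{n})\subseteq X^{n}$ and again $\operatorname{dis}(R_f)<2\delta'$.

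The main obstacle is precisely the construction of such a $\rho$, since its two required properties pull in opposite directions: it must be a genuine metric inducing the manifold topology on $M^{n}$, so that $M^{n}_{\delta}:=(M^{n},\rho)$ is honestly a topological $n$-manifold, while simultaneously agreeing with $\operatorname{gap}$ up to $O(\delta')$. The obvious candidates each fail one of these. The quotient (length) metric of the fibre partition induces the correct topology but can collapse: a chain $u=w_0,\dots,w_k=v$ controls $\operatorname{gap}(u,v)$ only up to an additive $(k-1)\delta'$ arising from the free ``teleports'' inside the $\delta'$-small fibres, so long chains may shortcut large distances. The Hausdorff metric $d_H\bigl(f^{-1}(u),f^{-1}(v)\bigr)$ is a bona fide metric lying uniformly close to $\operatorname{gap}$, but the assignment $u\mapsto f^{-1}(u)$ is in general only upper semicontinuous---it is continuous, and hence $(M^{n},d_H)\cong M^{n}$, exactly when $f$ is open---so without openness this metric may refine the manifold topology and destroy manifold-ness. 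I expect the tension to be resolved by exploiting the structure of $X^{n}$ rather than by brute force: because $X^{n}$ is a finite-dimensional ANR with the local homology of $\mathbb{R}^{n}$, a structure map of sufficiently small mesh should be a fine homotopy equivalence, and one can seek a continuous, injective (hence embedded) homotopy inverse $g\colon M^{n}\hookrightarrow X^{n}$; pulling $d_X$ back along $g$ then yields a metric $\rho$ that is at once compatible with the manifold topology and $O(\delta')$-close to the fibre distances. Making this step precise---producing the embedded inverse $g$, i.e.\ embedding $M^{n}$ into the possibly non-locally-Euclidean space $X^{n}$, and verifying that $g\circ f$ is a genuine $\delta'$-homotopy of the identity---is the delicate heart of the proof, and it is here, together with the choice of how small $\delta'$ must be taken relative to $\delta$, that the local structure of $X^{n}$ does the real work.
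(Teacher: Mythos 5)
Your reduction of the Gromov--Hausdorff estimate to finding a suitable metric $\rho$ on $M^{n}$ is sound, and you correctly identify the construction of $\rho$ as the crux. But the resolution you propose --- a continuous injective homotopy inverse $g\colon M^{n}\hookrightarrow X^{n}$, with $\rho$ the pullback of $d_{X}$ --- cannot exist unless $X^{n}$ is already a topological manifold, which trivializes the theorem in exactly the cases of interest (e.g.\ nonresolvable $X^{n}$, which are never manifolds). Generalized manifolds satisfy invariance of domain (Wilder; see also Bredon's sheaf-theoretic duality), so a continuous injection of a closed $n$-manifold into a connected generalized $n$-manifold has open image; the image is also compact, hence all of $X^{n}$, and then $g$ is a homeomorphism. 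One can see this directly by duality: for a proper compact subset $A\subsetneq X^{n}$ of a compact connected oriented generalized $n$-manifold, Alexander duality gives $\check{H}^{n}(A;\mathbb{Z})\cong H_{0}(X^{n},X^{n}\setminus A;\mathbb{Z})=0$, whereas $g(M^{n})\cong M^{n}$ has $\check{H}^{n}\cong\mathbb{Z}$ (the disconnected case follows componentwise). So the step you call the ``delicate heart'' is not delicate but impossible in general, and no choice of $\delta'$ relative to $\delta$ can repair it; the same obstruction dooms any attempt to realize $\rho$ by topologically embedding $M^{n}$ inside $X^{n}$.

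The paper avoids this trap by never mapping $M^{n}$ into $X^{n}$: it embeds both spaces into Euclidean space. Fix $i\colon X^{n}\hookrightarrow\mathbb{R}^{m}$ with the induced metric, let $N_{\delta}$ be the $\delta$-neighbourhood of $i(X^{n})$ in $\mathbb{R}^{m}$, and take $f_{\delta}$, $g_{\delta}$ as in Proposition~\ref{L} (Eilenberg's theorem, via Ferry), so that $g_{\delta}\circ f_{\delta}$ is $\delta$-homotopic to $\mathrm{Id}_{X^{n}}$. Since $m$ may be taken large, general position (Rourke--Sanderson) approximates the map $i\circ g_{\delta}\colon M^{n}_{\delta}\to N_{\delta}\subset\mathbb{R}^{m}$ by an embedding $j$ with $d(i\circ g_{\delta},j)<\delta'$, and one gives $M^{n}_{\delta}$ the metric induced by $j$. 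Then $d(i(x),j(M^{n}_{\delta}))\le d(i(x),ig_{\delta}f_{\delta}(x))+d(ig_{\delta}f_{\delta}(x),j(M^{n}_{\delta}))<\delta+\delta'$, while $j(M^{n}_{\delta})\subset N_{\delta}$ by construction; so the two sets have Hausdorff distance $<\delta+\delta'$ inside a common compact $Z\subset\mathbb{R}^{m}$, whence $d_{G}(X^{n},M^{n}_{\delta})<\delta+\delta'$. Note that this also shows you were over-constraining the problem: $d_{G}$ does not require the metric on $M^{n}_{\delta}$ to reproduce the fibre distances $\operatorname{gap}(u,v)$ up to $O(\delta')$; Hausdorff proximity in \emph{some} ambient space suffices, and in $\mathbb{R}^{m}$ there is room to arrange it.
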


\begin{remark}\label{remark}
The metric on generalzed $n$-manifold $X^{n}$ is induced by a fixed embedding  $X^{n} \hookrightarrow \mathbb{R}^{m}$  of $X^{n}$ into some 
Euclidean $m$-space $\mathbb{R}^{m},$ 
for a sufficiently large dimension  $m\in \mathbb{N}$.
The metric on topological $n$-manifold  $M^{n}_{\delta}$  is then induced by an embedding 
$M^{n}_{\delta} \hookrightarrow N_{X^{n}}^{m}$ 
of $M^{n}_{\delta}$ into a small neighbourhood $N_{X^{n}}^{m} \subset \mathbb{R}^{m}$ of $X^{n}$ in $\mathbb{R}^{m}$ (see Section \ref{s2}  for more details).
\end{remark}

Edwards~\cite{Edw1978} obtained a fundamental criterion for a generalized $n$-manifold $X^{n}$ to be a topological $n$-manifold.
The first (sufficient) condition is the existence of a {\it cell-like map}
$f\colon M^n\to X^{n}$, where $M^n$ is a closed topological $n$-manifold, also called the 
 \textit{(cell-like) resolution of $X^{n}$}
 (see e.g., Mitchell and Repov\v{s} \cite{MR}).
  By the uniqueness result of Quinn
(\cite[Proposition 3.2.3]{Qu79}), any two resolutions $f_{1}\colon M_{1}^n\to X^{n}$ and $f_{2}\colon M_{2}^n\to X^{n}$
of $X^{n}$ are equivalent, i.e. for every
 $\varepsilon >0,$ 
there exists a homeomorphism 
$h_{\varepsilon} \colon M_{1}^{n} \to M_{2}^{n}$
such that $d(f_1, f_2 \circ h_{\varepsilon})<\varepsilon.$
The second (sufficient) condition is a general position type of property, the so-called {\it disjoint disks property} of $X^{n}$ (see e.g.,
Cavicchioli {\it et al.} \cite{CHR16}). 

Quinn~\cite{Qu83,Qu87} developed a
controlled surgery theory and constructed a
surgery   obstruction 
$i(X^{n})\in\mathbb Z$ to the existence of  resolutions of
generalized $n$-manifolds  $X^{n}$.
It is convenient to consider $I(X^{n}):=1+8i(X^{n})$, called the
 \textit{resolution index} (this appears naturally, passing from the quadratic 
$\mathbb L$-spectrum
to the symmetric $\mathbb L$-spectrum, see 
Ranicki \cite{Ra92}). So $I(X^{n})=1$ if and only if $X^{n}$ admits a (cell-like) resolution.

There are no known general methods for calculating Quinn's resolution index $I(X^{n})$, like there are for other invariants. 
In this paper we shall show 
that it vanishes for a certain class of manifold-like
generalized $n$-manifolds, and thus we shall prove
 that they are resolvable (see Theorem~\ref{B} below). 
First, we need some more notations (see 
Ferry \cite[\S 29]{Fe92}).

\begin{definition}
A function $\varrho\colon [0,R) \to [0,\infty)$ is called  \textit{contractible} if for every $t$,
$\varrho(t)\geq t$, and $\varrho$ is continuous at 0. Let $\mathcal{M}(\varrho,n)$ denote
the set of all compact metric spaces $M$ of dimension $\leq n$, such that for every $x \in M,$
the $r$-ball $B_r(x)=\{y\in M \mid d(x,y)\leq r\}$ contracts to $\{x\}$ inside 
the $\varrho(r)$-ball
$B_{\varrho(r)}(x)$.
\end{definition}

The following is the second main result of our paper.

 \begin{theorem}[Resolution Theorem]\label{B}
  Let 
 $X^{n}$
 be a
  gene\-ra\-lized $n$-ma\-ni\-fold
  and fix an embedding   $i \colon X^{n} \hookrightarrow \mathbb{R}^{m}$ for some $m\ge n\ge 5$.
 Let 
 $\varrho\colon [0,R) \to [0,\infty)$
  be  a contractible function and suppose that for every small 
  $\delta > 0,$
there is 
a structure map 
$f_{\delta} : X^{n} \to M^{n}_{\delta}$
  such that $M^{n}_{\delta}\in \mathcal{M}(\varrho,n)$
   with respect to the metric defined in Theorem \ref{A}.
Then $X^{n}$ is resolvable.
 \end{theorem}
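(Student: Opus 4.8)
The plan is to construct an honest cell-like resolution $g\colon M^{n}\to X^{n}$ from a single closed topological $n$-manifold $M^{n}$; by the definition of the resolution index recalled above, the existence of such a cell-like map forces $I(X^{n})=1$, so $X^{n}$ is resolvable. As a first step I apply Theorem~\ref{A} to the given structure maps: fixing a sequence $\delta_{i}\downarrow 0$, I obtain manifolds $M^{n}_{\delta_{i}}\in\mathcal M(\varrho,n)$ with $d_{G}(X^{n},M^{n}_{\delta_{i}})<\delta_{i}$, realized as in Remark~\ref{remark} by embeddings $M^{n}_{\delta_{i}}\hookrightarrow N^{m}_{X^{n}}$ into a fixed small neighbourhood of $X^{n}$ in $\mathbb R^{m}$, so that the ambient Hausdorff distance between $X^{n}$ and $M^{n}_{\delta_{i}}$ tends to $0$.

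Next I upgrade the $\delta_{i}$-maps to controlled homotopy equivalences whose control tends to $0$. Let $r\colon N^{m}_{X^{n}}\to X^{n}$ be a retraction of the neighbourhood onto the ANR $X^{n}$, and set $g_{\delta_{i}}:=r|_{M^{n}_{\delta_{i}}}\colon M^{n}_{\delta_{i}}\to X^{n}$. Both the structure map $f_{\delta_{i}}$ (ambient $\delta_{i}$-close to the inclusion of $X^{n}$, by the construction underlying Theorem~\ref{A}) and $g_{\delta_{i}}$ (which moves points by less than the Hausdorff distance) are near-identity in $\mathbb R^{m}$, so the composites $g_{\delta_{i}}f_{\delta_{i}}$ and $f_{\delta_{i}}g_{\delta_{i}}$ are $\varepsilon_{i}$-close to the respective identities; the uniform contractibility governed by $\varrho$ then lets me fill in $\varepsilon_{i}$-homotopies to the identity, with $\varepsilon_{i}\to 0$. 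Hence each $g_{\delta_{i}}$ is an $\varepsilon_{i}$-homotopy equivalence, with $f_{\delta_{i}}$ as an $\varepsilon_{i}$-homotopy inverse.

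To obtain a single manifold domain I invoke Ferry's stability theorem for the class $\mathcal M(\varrho,n)$ (Ferry~\cite[\S 29]{Fe92}): mutually Gromov--Hausdorff close, uniformly locally contractible closed $n$-manifolds are homeomorphic. Thus for all large $i$ the spaces $M^{n}_{\delta_{i}}$ are homeomorphic to one fixed closed $n$-manifold $M^{n}$, via homeomorphisms of $\varrho$-controlled modulus. Transporting the maps $g_{\delta_{i}}$ along these homeomorphisms I obtain fine homotopy equivalences $g_{i}\colon M^{n}\to X^{n}$ which, by the uniform local contractibility, form an equicontinuous family; by Arzel\`a--Ascoli a subsequence converges uniformly to a map $g\colon M^{n}\to X^{n}$.

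The final and principal step is to verify that the limit $g$ is cell-like. Given $\varepsilon>0$, I choose $i$ so large that $\varepsilon_{i}<\varepsilon$ and $g$ is uniformly $\varepsilon$-close to $g_{i}$; then the $\varepsilon_{i}$-homotopy inverse of $g_{i}$ serves, after a $\varrho$-controlled adjustment, as an $O(\varepsilon)$-homotopy inverse of $g$. Since $\varepsilon$ is arbitrary, $g$ is a $\mathcal U$-homotopy equivalence for every open cover $\mathcal U$ of $X^{n}$, i.e. a fine homotopy equivalence; between compact finite-dimensional ANRs this is equivalent to being cell-like, so each point-inverse $g^{-1}(x)$ has trivial shape and $g$ is a resolution, giving $I(X^{n})=1$. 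I expect this step to be the main obstacle: the delicate matters are the stability of $\varepsilon$-homotopy equivalences under uniform approximation and the passage from ``fineness at every scale'' to genuine cell-likeness of the point-inverses, both of which must be controlled uniformly through $\varrho$. Alternatively, the conclusion can be phrased obstruction-theoretically: the fine homotopy equivalences $g_{i}$ identify Quinn's resolution obstruction $i(X^{n})$ with $i(M^{n})=0$ along the controlled surgery sequence, again yielding $I(X^{n})=1$.
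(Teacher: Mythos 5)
Your main construction (steps three and four) contains two genuine gaps, and, ironically, the single sentence you offer as an ``alternative'' at the very end is the actual proof. First, the stability theorem you invoke --- that mutually Gromov--Hausdorff close, uniformly locally contractible closed $n$-manifolds are homeomorphic --- does not appear in Ferry~\cite[\S 29]{Fe92} in that form, and as a uniform statement over the class $\mathcal{M}(\varrho,n)$ it is \emph{false}: by Dranishnikov, Ferry and Weinberger~\cite{DFW} there exist pairs of non-homeomorphic closed $n$-manifolds with a common contractibility function at arbitrarily small Gromov--Hausdorff distance from each other. The correct statement (Ferry's finiteness theorem~\cite{Fe94}) requires the manifolds to be close to a \emph{fixed finite-dimensional} compactum; it would apply here with center $X^{n}$, but it is a far deeper result whose own proof runs through controlled surgery, so you would be using much heavier machinery than the theorem needs. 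Second, even granting a single domain manifold $M^{n}$, the Arzel\`a--Ascoli step fails as written: equicontinuity of the transported maps $g_{i}=g_{\delta_{i}}\circ h_{i}$ does not follow from uniform local contractibility, because the homeomorphisms $h_{i}\colon M^{n}\to M^{n}_{\delta_{i}}$ carry no uniform modulus of continuity; without equicontinuity there is no convergent subsequence, and without a controlled choice of the $h_{i}$ there is no reason a limit, if it existed, would have cell-like point-inverses. In effect, steps three and four attempt to re-prove Quinn's resolution theorem by hand in this special case, which is exactly what one should avoid.

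The good news is that everything after your second step is unnecessary. Steps one and two already produce, for every small $\varepsilon>0$, an $\varepsilon$-homotopy equivalence from a closed $n$-manifold to $X^{n}$ with control measured in $X^{n}$; by the controlled surgery characterization of the resolution obstruction (Proposition~\ref{p}: the controlled structure set of $X^{n}$ is nonempty if and only if $I(X^{n})=1$), this alone yields $I(X^{n})=1$, i.e.\ resolvability. This is precisely the paper's argument: Theorem~\ref{A} gives manifolds $M^{n}_{\delta_{i}}\in\mathcal{M}(\varrho,n)$ with $d_{G}(X^{n},M^{n}_{\delta_{i}})<\delta_{i}$; Ferry's Theorem~29.20 (Theorem~\ref{F} --- your step two is essentially a direct proof of it in this situation, using that $X^{n}$ itself lies in some $\mathcal{M}(\varrho',n)$ by Ferry's Theorem~29.14) converts Gromov--Hausdorff closeness into $\varepsilon_{i}$-homotopy equivalence; and Proposition~\ref{p} concludes. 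So keep steps one and two, promote your final ``alternatively'' sentence to be the conclusion of the proof, and delete the stability and Arzel\`a--Ascoli construction entirely.
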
 
 
 \begin{remark}
We recall that the metric on generalized $n$-manifold
$X^{n}$ (resp. topological $n$-manifold $M^{n}_{\delta}$)
 is induced by the embedding
 $X^{n} \hookrightarrow \mathbb{R}^{m}$
 (resp. 
$M^{n}_{\delta} \hookrightarrow N_{X^{n}}^{m} \subset \mathbb{R}^{m}$).
\end{remark}
 
 As an application, consider the following nice result of Ferry~\cite[Proposition~29.38]{Fe92}.
 
 \begin{theorem}\label{Fe}
 Suppose that $X=\varinjlim\{M^{n}_{i}\}$, where $\{M^{n}_{i}\}\subset \mathcal{M}(\varrho,n)$,    in 
 the
 Gromov-Hausdorff metric. If $\mbox{\rm{dim}}X<\infty$, then $X$ is a generalized $n$-manifold.
  \end{theorem}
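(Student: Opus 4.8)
The plan is to reduce the statement to the Mardešić--Segal criterion, Theorem~\ref{MS}: it suffices to exhibit $X$ as a compact $n$-dimensional metric ANR admitting, for every $\delta>0$, a $\delta$-map onto a closed oriented topological $n$-manifold. Two of the three required features — that $X$ is a compact metric ANR, and the existence of the $\delta$-maps — come respectively from the uniform local contractibility built into $\mathcal{M}(\varrho,n)$ and from the Gromov--Hausdorff convergence $X=\varinjlim\{M^n_i\}$. The remaining dimension bookkeeping then falls out of the $\delta$-maps themselves.

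First I would verify that $X$ is a compact metric ANR. Each $M^n_i$ is uniformly locally contractible with one and the same modulus $\varrho$, and this property is stable under Gromov--Hausdorff limits: a contraction of a ball $B_r(x_i)\subset M^n_i$ inside $B_{\varrho(r)}(x_i)$ transports, along an $\varepsilon$-approximation, to a contraction of the corresponding ball of $X$ inside a ball of radius close to $\varrho(r)$, so $X$ is again uniformly locally contractible. Since by hypothesis $\dim X<\infty$, and a compact, finite-dimensional, uniformly locally contractible metric space is an ANR (the standard ANR characterization for finite-dimensional compacta), $X$ is a compact metric ANR.

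Next I would construct the $\delta$-maps. Fix $\delta>0$ and choose $i$ so large that $d_G(X,M^n_i)<\varepsilon$ for a small $\varepsilon=\varepsilon(\delta,\varrho)$ to be specified. Closeness in $d_G$ yields an $\varepsilon$-correspondence between $X$ and $M^n_i$, i.e. a relation distorting distances by at most $\varepsilon$. Using the uniform local contractibility of both spaces I would promote this correspondence to an honest continuous map $f_\delta\colon X\to M^n_i$: take a cover (or nerve) of $X$ of mesh $\ll\varepsilon$, send its vertices through the correspondence into $M^n_i$, and extend over the skeleta inductively, each extension being possible because the images of the bounding simplices lie in balls that $\varrho$ guarantees are contractible. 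By construction $f_\delta$ is an $\varepsilon'$-approximation to the correspondence, so any two points of a single fibre $f_\delta^{-1}(u)$ are sent $\varepsilon'$-close and hence, by the near-isometry property, lie within $O(\varepsilon)$ of each other in $X$; choosing $\varepsilon$ small enough makes every fibre have diameter $<\delta$. The image is $\varepsilon$-dense in the closed manifold $M^n_i$, and a small controlled adjustment supported in Euclidean charts makes $f_\delta$ onto. Thus $f_\delta$ is a $\delta$-map of $X$ onto the closed oriented $n$-manifold $M^n_\delta:=M^n_i$.

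It remains to pin down the dimension and apply Theorem~\ref{MS}. The $\delta$-maps give $\dim X\le n$ by the classical dimension-theoretic criterion that a compactum admitting $\varepsilon$-maps onto spaces of dimension $\le n$ for all $\varepsilon$ has dimension $\le n$; moreover, for small $\delta$ the map $f_\delta$ is an $\varepsilon$-homotopy equivalence between uniformly locally contractible spaces, hence a genuine homotopy equivalence, so $X$ inherits the nonvanishing top cohomology $H^n(M^n_\delta;\mathbb Z)\cong\mathbb Z$ of the closed oriented manifold and therefore $\dim X\ge n$. Thus $\dim X=n$, all hypotheses of Theorem~\ref{MS} are satisfied, and $X$ is a generalized $n$-manifold. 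The main obstacle is the middle step: the Gromov--Hausdorff distance supplies only an approximate isometry, not a map, and the entire force of the argument lies in using the modulus $\varrho$ to upgrade this correspondence to a genuine continuous map while simultaneously controlling its fibre diameters and its surjectivity; everything else is either hypothesis (finite dimensionality) or an appeal to Theorem~\ref{MS}.
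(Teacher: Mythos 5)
First, a point of orientation: the paper does not prove Theorem~\ref{Fe} at all --- it is quoted from Ferry~\cite[Proposition~29.38]{Fe92} and used as a black box to which Theorem~\ref{B} is then applied --- so your proposal can only be judged against Ferry's argument and on its own merits. Your skeleton (exhibit $X$ as a compact $n$-dimensional ANR admitting $\delta$-maps onto the $M^{n}_{i}$, then quote Theorem~\ref{MS}) is a reasonable reduction, and your nerve construction of maps $X\to M^{n}_{i}$ is essentially sound: since $\dim X<\infty$ one may use covers whose nerves have dimension $\le \dim X$, so only boundedly many skeletal extensions are needed and the iterated modulus $\varrho$ keeps the fibre control small. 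Note, however, that this is an additional, essential use of the hypothesis $\dim X<\infty$ (for infinite-dimensional limits, which do occur by Moore~\cite{TEM}, the construction breaks down exactly here), not merely the ``bookkeeping'' role your last paragraph assigns to it.

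The genuine gaps are in the other two steps. First, the ANR step: ``a contraction of $B_r(x_i)\subset M^{n}_{i}$ transports, along an $\varepsilon$-approximation, to a contraction of the corresponding ball of $X$'' is not an argument. A Gromov--Hausdorff approximation is a correspondence, not a map, so homotopies do not transport; and if you try to repair this by your own nerve method, you must map a disk \emph{into} $X$, and the skeletal extensions then require $X$ to be locally contractible in lower dimensions --- precisely what is being proved. Breaking this circularity (for instance by an induction on $k$ establishing that $X$ is $LC^{k}$ with control, or by a telescope of maps $M_{i}\to M_{i+1}\to\cdots$ with summable displacement whose uniform limit lands in $X$) is the real content of Ferry's proposition. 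It is ironic that your closing paragraph correctly identifies ``a correspondence is not a map'' as the central difficulty, but applies it only to maps \emph{out of} $X$, where the modulus of the target $M^{n}_{i}$ suffices, and not to the ANR step, where it bites hardest. Second, surjectivity: ``a small controlled adjustment supported in Euclidean charts makes $f_{\delta}$ onto'' is not a valid principle --- a map with $\varepsilon$-dense image cannot in general be made onto by a small perturbation, and at this stage you know nothing about $X$ beyond compactness and finite dimension. The correct fix is homological: produce a map $g\colon M^{n}_{i}\to X$ in the opposite direction (which again requires the ANR property, or the telescope), observe that $f_{\delta}\circ g$ displaces points by little, hence is homotopic to $\mathrm{id}_{M^{n}_{i}}$, hence has degree $\pm 1$, hence $f_{\delta}$ is onto. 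Equivalently, once $X$ is known to be a finite-dimensional compact ANR, it lies in $\mathcal{M}(\varrho',n')$ for some contractible $\varrho'$ (Ferry~\cite[Theorem~29.14]{Fe92}), and Theorem~\ref{F} then yields $\varepsilon$-homotopy equivalences $X\to M^{n}_{i}$, which are automatically onto by degree and are $2\varepsilon$-maps by the smallness of the homotopy tracks; this would also justify your otherwise unproven assertion that $f_{\delta}$ is a homotopy equivalence, which you need for $\dim X\ge n$. As written, the proposal's two hard steps --- the ANR property of the limit and the surjectivity of the $\delta$-maps --- are asserted rather than proved.
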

  
  It now follows by
  our   Theorem~\ref{B} that the space $X$  in  Theorem~\ref{Fe}
is in fact, a {\it resolvable} generalized $n$-manifold $X$.  
  For some related previous results on limits in the Gromov-Hausdorff metric space see
  Dranishnikov and Ferry~ \cite{DF,DF06}
   Dranishnikov {\it et al.}~\cite{DFW},
   Engel~\cite{En},
   Ferry~\cite{FePreprint,Fe94, Fe98},
   Ferry and Okun~\cite{FO},
   Grove {\it et al.}~\cite{GPW},
   Kawamura~\cite{K}, 
   and
   Moore~\cite{TEM}.  
  
  We conclude the introduction with the following very interesting open problem  related to our Theorem~\ref{B}. Recall that there are plenty of nonresolvable generalized $n$-manifolds - see e.g., Cavicchioli {\it et al.} \cite{CHR}.
  How about {\it manifold-like} generalized $n$-manifolds?
 
 \begin{question}
Does there exist, for any $n\ge 5,$ a nonresolvable manifold-like generalized $n$-manifold?
 \end{question}
  
  \section{Proof of Theorem~\ref{A}}\label{s2}
  
  Let $X^{n}$ be a manifold-like generalized $n$-manifold. 
    For any
   $\delta>0$,
  let 
 $f_{\delta}\colon X^{n}\to M^{n}_\delta$ be
 a {\it structure map} 
 from Definition \ref{strmap}. 
 We shall invoke the following result due to Eilenberg
  (see e.g. Ferry \cite[Corollary~29.10]{Fe92}).
  
   \begin{proposition}\label{L}
   For every $\delta>0,$ there exist
   a structure map
    $f_{\delta}\colon X^{n}\to M_{\delta}^{n}$ and a continuous
   map $g_{\delta}\colon M_{\delta}^{n}\to X^{n}$ such that $g_{\delta}\circ f_{\delta}\colon X^{n}\to X^{n}$ is $\delta$-homotopic to $\mbox{Id}_{X^{n}}\colon X^{n}\to X^{n}$.
    \end{proposition}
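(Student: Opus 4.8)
The plan is to use only that $X^{n}$ is a \emph{compact ANR}: a sufficiently fine $\delta$-map onto a manifold will admit an approximate right inverse, and $\epsilon$-closeness of maps into an ANR upgrades automatically to a small homotopy. First I would fix the embedding $X^{n}\hookrightarrow\mathbb R^{m}$ from Remark~\ref{remark} together with a retraction $r\colon N\to X^{n}$ of an open neighborhood $N\subset\mathbb R^{m}$ onto $X^{n}$, and record two elementary facts. (i) Since $r$ is uniformly continuous on a compact set and straight segments of bounded length stay in $N$, for every $\delta>0$ there is $\epsilon>0$ such that any two maps $h_{0},h_{1}\colon Z\to X^{n}$ with $|h_{0}(z)-h_{1}(z)|<\epsilon$ for all $z$ are joined by a $\delta$-homotopy, namely $t\mapsto r\!\left((1-t)h_{0}+t\,h_{1}\right)$. (ii) If $f\colon X^{n}\to Y$ is a $\delta'$-map from the compact space $X^{n}$, then preimages of small sets are small: a routine compactness argument gives a $\mu>0$ with $\mathrm{diam}\,f^{-1}(S)<\epsilon/2$ whenever $\mathrm{diam}\,S<\mu$ (otherwise limits of near-colliding preimage points would produce two points at distance $\ge\epsilon/2$ inside a single fiber, contradicting the $\delta'$-map property once $\delta'<\epsilon/2$).

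Next I would thread the parameters. For the prescribed $\delta$ choose $\epsilon$ as in (i), then pick $\delta'\le\min\{\delta,\epsilon/2\}$ and take a structure map $f=f_{\delta'}\colon X^{n}\to M:=M^{n}_{\delta'}$ onto a closed $n$-manifold. Because any $\delta'$-map with $\delta'\le\delta$ is in particular a $\delta$-map, $f$ already serves as the required $f_{\delta}$ (relabel $M^{n}_{\delta}:=M$). It remains to build $g_{\delta}$. Using fact (ii) for this $f$, fix $\mu>0$; then choose a finite open cover $\{U_{\alpha}\}$ of the manifold $M$ so fine that each star $\mathrm{St}(u)=\bigcup_{u\in U_{\alpha}}U_{\alpha}$ has diameter $<\mu$, together with a subordinate partition of unity $\{\phi_{\alpha}\}$, and for each $\alpha$ select a point $x_{\alpha}\in f^{-1}(U_{\alpha})\subset X^{n}\subset\mathbb R^{m}$. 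I then define a continuous map $\hat g\colon M\to\mathbb R^{m}$ by
\[
\hat g(u)=\sum_{\alpha}\phi_{\alpha}(u)\,x_{\alpha},
\]
and set $g:=r\circ\hat g\colon M\to X^{n}$, which is well defined once the cover is fine enough that $\hat g$ lands in $N$.

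Finally I would estimate $g\circ f$ against $\mathrm{Id}_{X^{n}}$. Fix $x\in X^{n}$ and put $u=f(x)$. Every index $\alpha$ contributing to $\hat g(u)$ satisfies $u\in U_{\alpha}$, hence $x_{\alpha}\in f^{-1}(U_{\alpha})\subset f^{-1}(\mathrm{St}(u))$; moreover $x\in f^{-1}(u)\subset f^{-1}(\mathrm{St}(u))$. Since $\mathrm{diam}\,\mathrm{St}(u)<\mu$, fact (ii) gives $\mathrm{diam}\,f^{-1}(\mathrm{St}(u))<\epsilon/2$, so each $x_{\alpha}$ is within $\epsilon/2$ of $x$ in $\mathbb R^{m}$; being a convex combination of such points, $\hat g(u)$ lies within $\epsilon/2$ of $x$ as well. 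The uniform continuity of $r$, together with $r(x)=x$, then yields $|g(f(x))-x|<\epsilon$, and applying (i) to $h_{0}=g\circ f$, $h_{1}=\mathrm{Id}_{X^{n}}$ produces the desired $\delta$-homotopy $g_{\delta}\circ f_{\delta}\simeq\mathrm{Id}_{X^{n}}$.

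The main obstacle is precisely the passage from the obvious but \emph{discontinuous} near-section $u\mapsto(\text{a point of }f^{-1}(u))$ to an honest continuous $g$ without losing control of the error. This is exactly where the ANR hypothesis is indispensable: the partition-of-unity averaging performed in the ambient $\mathbb R^{m}$, followed by the retraction $r$, is what converts local choices into a global map while keeping $g\circ f$ within $\epsilon$ of the identity. All the quantifiers must be introduced in the correct order ($\delta\rightsquigarrow\epsilon$ by (i), then $\delta'\le\epsilon/2\rightsquigarrow\mu$ by (ii), then the mesh and star-diameter of the cover $<\mu$), and verifying fact (ii) is the only genuinely non-formal step.
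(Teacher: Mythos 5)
Your argument is essentially correct, but it is not the route the paper takes --- in fact the paper gives no proof of Proposition~\ref{L} at all: it invokes the statement as a classical result of Eilenberg, citing Ferry \cite[Corollary~29.10]{Fe92}. The block in Section~\ref{s2} headed ``Proof of Proposition~\ref{L}'' is really a proof of Supplement~\ref{supp}: it takes the conclusion of the Proposition (that $g_{\delta}\circ f_{\delta}$ is homotopic to $\mathrm{Id}_{X^{n}}$) as its starting hypothesis and upgrades $f_{\delta}$ to a full homotopy equivalence, using degree-one and duality arguments, Browder's theorem on $\pi_1$-surjectivity of degree-one maps \cite{Browd72}, and a homology criterion of Ferry \cite{FePreprint}. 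What you have written is a self-contained reconstruction of the quoted Eilenberg-type result: the partition-of-unity/nerve construction of an approximate right inverse to a surjective $\delta$-map (averaging preselected fiber points $x_{\alpha}\in f^{-1}(U_{\alpha})$ in the ambient $\mathbb{R}^{m}$, then retracting back into $X^{n}$), combined with the standard fact that $\epsilon$-close maps into a compact ANR are joined by a small homotopy. Your route buys self-containedness and isolates exactly what is needed --- compactness, the ANR property, and surjectivity of the $\delta$-map; no manifold structure, orientation, or duality enters. The paper's route buys brevity and reserves its effort for the strengthening (Supplement~\ref{supp}) that the remainder of Section~\ref{s2} actually uses.

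One quantifier in your final estimate is misordered, though the repair stays entirely within your framework. From $|\hat g(f(x))-x|<\epsilon/2$, uniform continuity of $r$ only gives $|r(\hat g(f(x)))-r(x)|<\omega(\epsilon/2)$, where $\omega$ is a modulus of continuity of $r$ on a compact neighborhood of $X^{n}$; nothing forces $\omega(\epsilon/2)<\epsilon$. The pre-retraction scale must be chosen \emph{after} $\epsilon$: having fixed $\epsilon$ by your fact (i), pick $\eta>0$ with $\omega(\eta)<\epsilon$ and with the $\eta$-neighborhood of $X^{n}$ contained in $N$, then take $\delta'<\eta$ and run fact (ii) and the star-diameter bound at scale $\eta$ instead of $\epsilon/2$. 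With that single change, $|\hat g(f(x))-x|<\eta$ yields $|g(f(x))-x|<\epsilon$, and fact (i) finishes the proof exactly as you intended.
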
  
  
This is a special case where also the following fact holds.
  
  \begin{supplement}\label{supp}
   The structure map
     $f_{\delta}\colon X^{n}\to M_{\delta}^{n}$ 
   from
   Proposition~\ref{L} is a homotopy equivalence
  with
  the
   inverse $g_{\delta}\colon M_{\delta}^{n}\to X^{n}$.
  \end{supplement}
 
  {\em Proof of Proposition~\ref{L}:} The induced map 
  \[
 ( f_{\delta})_*\colon H_*(X^{n};\mathbb Z)\to H_*(M_{\delta}^{n};\mathbb Z)
  \]
   is injective
  since $g_{\delta}\circ f_{\delta} \sim {Id}_{X^{n}}$. 
  Therefore the composition  
  \[
  H_n(X^{n};\mathbb Z)\stackrel{(f_{\delta})_{*}}{\to} 
  H_n(M_{\delta}^{n};\mathbb Z)\stackrel{(g_{\delta})_{*}}{\to}H_n(X^{n};\mathbb Z)
  \]
  is the identity,
  $(g_{\delta})_{*}\circ (f_{\delta})_{*}=(Id_{X^{n}})_{*},$
   and we have
  \[
  H_n(M_{\delta}^{n};\mathbb Z)\cong\mathbb Z, \ \ (g_{\delta})_*([M_{\delta}^{n}])=[X^{n}],
  \]
  if we choose the fundamental class 
  appropriately.  
   It follows by duality that  the induced map
   \[
   (f_{\delta})_*\colon H_*(X^{n};\mathbb Z)\to H_*(M_{\delta}^{n};\mathbb Z)
   \]
   is also surjective and
   that
     $f_{\delta}\colon X^{n}\to M_{\delta}^{n}$
     and $g_{\delta}\colon M_{\delta}^{n}\to X^{n}$
      are both of degree 1.
In particular, since the map $f_{\delta}\colon X^{n}\to M_{\delta}^{n}$ is of degree 1, it now
follows that the induced map 
\[
(f_{\delta})_*\colon \pi_1(X^{n})\to\pi_1(M_{\delta}^{n})
\]
 is surjective (see Browder~\cite[Proposition~1.2]{Browd72}).  
Since 
$(f_{\delta})_*\colon \pi_1(X^{n})\to\pi_1(M_{\delta}^{n})$ is
also  injective, it is in fact, an isomorphism. 

Now, arguing as above, we can show that 
$f_{\delta}\colon X^{n}\to M_{\delta}^{n}$ 
 induces  isomorphisms  in homology with coefficients in
group rings. It therefore follows by Ferry \cite[Theorem 7.4]{FePreprint} that
 $f_{\delta}\colon X^{n}\to M_{\delta}^{n}$ 
  is indeed a homotopy equivalence
  with the inverse
$g_{\delta}\colon M_{\delta}^{n}\to X^{n}$. 
 This completes the proof of Proposition~\ref{L}.
\hfill $\Box$

\begin{definition} The {\it Gromov-Hausdorff distance} between any compact metric spaces $X$ and $Y$ is defined as follows:
For  any closed subsets $X$ and $Y$ of a compact metric space $(Z,d),$ 
and any
$\delta >0,$
define their neighborhoods  
\[
N_{\delta}(X):=\{z\in Z \mid d(z,X)<\delta\},
\]
 and  
\[
N_{\delta}(Y):=\{z\in Z \mid d(z,Y)<\delta\}
\]
and define the following distances
\[
d_Z(X,Y):=\inf\{\delta>0 \mid X\subset N_{\delta}(Y)  \ and \  Y\subset N_{\delta}(X)  \}
\]
and 
\[
d_G(X,Y):=\inf\{d_Z(X,Y)\mid X,Y \;are \ isometrically \ embedded  \ in \ \, Z\},
\]
where $Z$ ranges over all compact metric spaces.
\end{definition}
\begin{remark}
The Gromov-Hausdorff convergence is a notion of
 convergence of metric spaces which is a generalization of the 
classical
Hausdorff convergence.
The Gromov-Hausdorff distance was introduced  in 1975 
by 
Edwards~\cite{1}
and 
then rediscovered and generalized in 1981 by 
Gromov~\cite{3}
 (see also
Tuzhilin~\cite{2}).
\end{remark}
To determine $d_G(X^{n},M_{\delta}^{n})$ for 
a structure map
 $f_{\delta}\colon X^{n}\to M_{\delta}^{n}$, the choice of the metric is important. We choose an embedding 
$X^{n}\hookrightarrow \mathbb{R}^m$, and take on $X^{n}$ the metric
induced from $\mathbb{R}^m$. It is important to note that the property of $f_{\delta}\colon X^{n}\to M_{\delta}^{n}$ 
being
 a structure map
  does not depend on the choice of the metric on $M_{\delta}^{n}$.
It will be appropriately chosen below.

Let $f_{\delta}\colon X^{n}\to M_{\delta}^{n}$ be
 a structure map 
 with the
inverse $g_{\delta}\colon M_{\delta}^{n}\to X^{n},$ such that $g_{\delta}\circ f_{\delta}$ is
$\delta$-homotopic to ${Id}_{X^{n}}$
for a given small $\delta>0$  (see Proposition~\ref{L}).
In the sequel, let 
\[
i\colon X^{n}\hookrightarrow N_{\delta}:= N_{\delta}(X^{n}\hookrightarrow{\mathbb R}^m)
\]
denote the inclusion of $X^{n}$ into a $\delta$-neighbourhood
$ N_{\delta}$
 of $X^{n}$ in $\mathbb{R}^m$.

Since by hypothesis, $X^n$ is  manifold-like, it follows that for arbitrary small $\delta'>0$, there exists an embedding $j\colon M_{\delta}^{n}\hookrightarrow N_{\delta}$ with
$d(i\circ g_{\delta}, j)<\delta'$ (see Rourke and Sanderson~\cite[General Position Theorem for Maps 5.4]{RourSand72}).
 These maps can be represented by the following diagram

\begin{equation}\label{d1}
	\begin{tikzpicture}[baseline=-1cm, node distance=2cm, auto]
	  \node (LU) {};
	  \node (XX) [node distance=1cm, below of=LU] { };
	  \node (NU') [right of=LU] {$X^{n}$};
	  \node (NU) [below of=NU'] {$M_{\delta}^{n}$};
	  \node (RU) [right of=NU'] {};
	  \node (X2) [node distance=1cm, below of=RU] {$N_{\delta}$};	 
	  \draw[->, font=\small] (NU) to [bend left] node {$g_{\delta}$} (NU');
	  \draw[->, font=\small] (NU') to [bend left]  node {$f_{\delta}$} (NU);
	  \draw[->, font=\small] (NU') to node {$i$} (X2);	 
	  \draw[->, font=\small] (NU) to node [swap] {$j$} (X2); 
	\end{tikzpicture}
	\end{equation}	

We choose on $M_{\delta}^{n}$ the metric induced on $j(M_{\delta}^{n})\subset \mathbb{R}^m$.
Since 
\[
d(i\circ g_{\delta}, j)<\delta',
\]
 we
can
 deduce the following
 \[
 d(i(x), j(M_{\delta}^{n}))\leq d(i(x), (i\circ g_{\delta}\circ f_{\delta})(x))+d((i\circ g_{\delta}\circ f_{\delta})(x), j(M_{\delta}^{n}))
<\delta+\delta',
\]
 i.e.,  
\[
i(X^{n})\subset N_{\delta+\delta'}(j(M_{\delta}^{n})\subset\mathbb{R}^m)
\]
 (see also Remark~\ref{R} below).

Of course, $N_{\delta}$ and $N_{\delta+\delta'}(j(M_{\delta}^{n})\subset\mathbb{R}^m)$
belong to a compact subset $Z$ of $\mathbb{R}^m$ with the induced metric. We obtain the following
\[
d_G(X^{n},M_{\delta}^{n})\leq d_Z(X^{n},M_{\delta}^{n})<\delta+\delta'.
\]
 Now $\delta$ and $\delta'$ can be
chosen to be arbitrarily small, thus we have completed the proof of Theorem~\ref{A}. \hfill $\Box$

\begin{remark}\label{R}
Recall that 
\[
d(z,A):=\inf \{d(z,a)\mid a\in A\},
\]
 where $A\subset Z$ is a compact subset of the metric space $Z$.
For $z,z'\in Z$, the inequality 
\[
d(z',a)\leq d(z,z')+d(z,a)
\]
 implies the inequality 
 \[
 d(z',A)\leq d(z',z)+d(z,A),
 \]
 which was used above.
\end{remark}

\section{Proof of Theorem~\ref{B}}\label{s3}

In this section, we shall apply the controlled surgery sequence to prove Theorem~\ref{B}. For more details on this important subject we refer to
 Bryant {\it et al.}~\cite{BFMW96}, 
 Cavicchioli {\it el at.}~\cite{CHR16}, 
 Ferry~\cite{Fe10, FPPreprint,FP}, 
 Mio~\cite{Mio2000},
Pedersen {\it et al.}~\cite{ PQR03},
Pedersen and Yamasaki~\cite{ PY},
Quinn~\cite{ Qu83,Qu87},
Ranicki and Yamasaki~\cite{ RY},
and
Yamasaki~\cite{ Y}.
 
Let $\mathbb{L}$ denote the periodic $\mathbb{L}$-spectrum, i. e. $\mathbb{L}_0=
\mathbb{Z}\times G/{\mbox{\em TOP}}$, and $\mathbb{L}^+$ is its connected covering spectrum
with $\mathbb{L}_0^+=G/{\mbox{\em TOP}}$. 
Now, if ${\mathcal{S}}_{\varepsilon}\left(
\begin{array}{c}
X^{n}\\
\downarrow{{Id}}\\
X^{n}
\end{array}
\right)\neq \emptyset,$
then there exists an exact sequence 
\[
\dots
\rightarrow
H_{n+1}(X^{n};\mathbb{L}^+)
\rightarrow
H_{n+1}(X^{n};\mathbb{L})
\rightarrow
{\mathcal{S}}_{\varepsilon}\left(
\begin{array}{c}
X^{n}\\
\downarrow{Id}\\
X^{n}
\end{array}
\right)
\rightarrow
H_{n}(X^{n};\mathbb{L}^+)
\rightarrow
\dots
\]   
\noindent
Elements of ${\mathcal{S}}_{\varepsilon}\left(
\begin{array}{c}
X^{n}\\
\downarrow{Id}\\
X^{n}
\end{array}
\right)$ are equivalence classes of $\varepsilon$-homotopy equivalences 
$M^n\stackrel{h}{\to}X^{n}$
(measured in $X^{n}$), with $M^n$ a closed (oriented) topological $n$-manifold.
\begin{definition}
Two elements 
\[
M^{n}_1\stackrel{h_1}{\to}X^{n}, M^{n}_2\stackrel{h_2}{\to}X^{n}\in 
 {\mathcal{S}}_{\varepsilon}\left(
\begin{array}{c}
X^{n}\\
\downarrow{{Id}}\\
X^{n}
\end{array}
\right)
\]
 are said to be $\varepsilon$-{\it related}
if there exists a homeomorphism $\varphi\colon M^{n}_1\to M^{n}_2$ such that $h_2\circ \varphi$ is
$\varepsilon$-homotopic to $h_1$. 
\end{definition}
 \begin{remark}
 Being $\varepsilon$-related does not define an equivalence
 relation, but it is a part of the following assertion: There exists an $\varepsilon_0 >0$ depending only on 
 $X^{n},$ such that for every $\varepsilon\leq \varepsilon_0,$
 this
  becomes an equivalence relation. 
  \end{remark}
  
  For $p+q=n+1,$
 it follows from the spectral sequences 
 \[
 E^2_{pq}=H_p(X^{n}; \pi_q(\mathbb{L}))\Rightarrow
 H_{p+q}(X^{n}; \mathbb{L})
 \]
  and
 \[
 E^{+2}_{pq}=H_p(X^{n}; \pi_q(\mathbb{L}^+))\Rightarrow H_{p+q}(X^{n}; \mathbb{L}^+)
 \] 
  that 
  \[
  E^{+2}_{pq}=E^2_{pq},
  \]
   hence 
 \[
 H_{n+1}(X^{n}; \mathbb{L}^+)\cong H_{n+1}(X^{n}; \mathbb{L}).\]
  Moreover,
 $H_{n}(X^{n}; \mathbb{L}^+)\to H_{n}(X^{n}; \mathbb{L})$ must be injective. It follows that if  
\[
{\mathcal{S}}_{\varepsilon}\left(
\begin{array}{c}
X^{n}\\
\downarrow{{Id}}\\
X^{n}
\end{array}
\right)\neq \emptyset.
\]
then  it consists of only one element
\[
\hbox{card}\left[
{\mathcal{S}}_{\varepsilon}\left(
\begin{array}{c}
X^{n}\\
\downarrow{{Id}}
\\
X^{n}
\end{array}\right)\right]=1.
\]

\begin{proposition}\label{p}
Let $X^{n}$ be a generalized $n$-manifold. Then $I(X^{n})=1$ if and only if
\[
{\mathcal{S}}_{\varepsilon}\left(
\begin{array}{c}
X^{n}\\
\downarrow{{Id}}\\
X^{n}
\end{array}
\right)\neq \emptyset,
\]
 i.e. for every $\varepsilon\leq\varepsilon_0,$ there exists an
$\varepsilon$-homotopy equivalence $M^n\stackrel{h}{\to} X^{n}$.
\end{proposition}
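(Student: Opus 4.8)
The plan is to translate the resolution index condition into the existence of a cell-like resolution and then to match cell-like maps with the controlled structure set via the classical identification of cell-like maps with fine homotopy equivalences. Recall that, by Quinn's resolution theory \cite{Qu83,Qu87}, the equality $I(X^{n})=1$ holds if and only if $X^{n}$ admits a cell-like resolution $f\colon M^{n}\to X^{n}$ with $M^{n}$ a closed oriented topological $n$-manifold. Thus it suffices to prove that such a resolution exists if and only if the controlled structure set $\mathcal{S}_{\varepsilon}$ of the identity is nonempty for every $\varepsilon\le\varepsilon_{0}$.

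For the implication $I(X^{n})=1\Rightarrow\mathcal{S}_{\varepsilon}\neq\emptyset$, I would start from a cell-like resolution $f\colon M^{n}\to X^{n}$ and invoke the classical theorem that a proper cell-like surjection between finite-dimensional compact ANRs is a fine homotopy equivalence (Lacher; see also Mitchell and Repov\v{s}~\cite{MR}). Concretely, this means that for every open cover of $X^{n}$, hence, using the metric fixed in Theorem~\ref{A}, for every $\varepsilon>0$, the map $f$ is an $\varepsilon$-homotopy equivalence measured in $X^{n}$. In particular, for each $\varepsilon\le\varepsilon_{0}$ the resolution $f$ represents an element of $\mathcal{S}_{\varepsilon}$, which is therefore nonempty. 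This direction is essentially immediate.

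For the converse, I would assume $\mathcal{S}_{\varepsilon}\neq\emptyset$ for all $\varepsilon\le\varepsilon_{0}$ and aim to produce a single cell-like map. Choosing a sequence $\varepsilon_{i}\downarrow 0$ and representatives $h_{i}\colon M^{n}_{i}\to X^{n}$, I would first use the cardinality-one assertion established above (each nonempty $\mathcal{S}_{\varepsilon}$ consists of a single element, derived from the spectral-sequence comparison $E^{+2}_{pq}=E^{2}_{pq}$) to conclude that the $M^{n}_{i}$ and the $h_{i}$ are mutually $\varepsilon$-related; hence one may fix a single closed manifold $M^{n}$ together with a sequence of $\varepsilon_{i}$-homotopy equivalences $h_{i}\colon M^{n}\to X^{n}$ that are coherent up to homeomorphism and controlled homotopy. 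I would then apply Quinn's controlled (squeezing) machinery and the controlled $h$-cobordism theorem \cite{Qu83,Qu87} to pass to a limit $h\colon M^{n}\to X^{n}$ that is an $\varepsilon$-homotopy equivalence for every $\varepsilon>0$, i.e. a fine homotopy equivalence. By the converse half of the cell-like/fine-homotopy-equivalence correspondence, $h$ is then cell-like, so it is a resolution and $I(X^{n})=1$.

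The main obstacle is precisely this last limiting step: the hypothesis only provides, for each $\varepsilon$, some $\varepsilon$-homotopy equivalence, with no a priori relation between the maps for different values of $\varepsilon$, whereas a resolution is a single map that is simultaneously $\varepsilon$-controlled for all $\varepsilon$. Bridging this gap is where the uniqueness of controlled structures (cardinality one) and Quinn's controlled surgery apparatus do the real work, guaranteeing that the approximations can be chosen coherently and that their limit is a genuine cell-like map. I note that one could alternatively avoid the explicit construction of the limit by identifying, following Quinn, the controlled surgery obstruction to $\mathcal{S}_{\varepsilon}\neq\emptyset$ directly with the resolution obstruction $i(X^{n})$, so that $\mathcal{S}_{\varepsilon}\neq\emptyset$ for small $\varepsilon$ becomes equivalent to $i(X^{n})=0$, i.e. $I(X^{n})=1$.
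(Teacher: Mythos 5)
Your argument is essentially correct, but note that the paper itself offers no argument at all: its entire ``proof'' of Proposition~\ref{p} is a citation of Mio~\cite[\S 3]{Mio2000} and Bryant \emph{et al.}~\cite[p.~444]{BFMW96}. Measured against that standard argument, your forward implication is the same: $I(X^{n})=1$ gives a cell-like resolution $f\colon M^{n}\to X^{n}$ (Quinn, $n\ge 5$, exactly as the paper's introduction stipulates), and Lacher's theorem that a cell-like surjection of compact finite-dimensional ANRs is a fine homotopy equivalence places $f$ in $\mathcal{S}_{\varepsilon}$ for every $\varepsilon$. Your converse, however, takes a genuinely different route. The cited sources argue obstruction-theoretically: Quinn's local index equals $1$ on manifolds and is invariant under sufficiently controlled homotopy equivalence --- equivalently, the controlled surgery obstruction to nonemptiness of $\mathcal{S}_{\varepsilon}$ is identified with $i(X^{n})$ --- which is precisely the alternative you sketch in your closing sentence. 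You instead construct an actual resolution as a limit: the cardinality-one statement pins all representatives to a single manifold $M^{n}$ with consecutive maps $\varepsilon_{i}$-homotopic; summability of $\sum\varepsilon_{i}$ makes the sequence uniformly Cauchy; the limit $h$ is an $\varepsilon$-homotopy equivalence for every $\varepsilon>0$ (close maps into a compact ANR are homotopic with small tracks), it is onto (its image is compact and $\varepsilon$-dense for every $\varepsilon$), and hence cell-like by the converse half of Lacher's theorem. Two caveats: the controlled $h$-cobordism/squeezing machinery you invoke is not what this step needs --- uniform convergence plus the ANR estimates just listed suffice once coherence is in hand; and equality of classes in $\mathcal{S}_{\varepsilon_{i}}$ only yields relating homotopies of size comparable to $\varepsilon_{i}$ up to the stability constant, which your summable choice of $\varepsilon_{i}$ must (and can) absorb --- a technicality the paper also glosses over. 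What your route buys is a self-contained, geometric proof of the converse; what the cited route buys is brevity, quoting Quinn's index theory rather than rebuilding a limit resolution by hand.
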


\begin{proof} The proof  is standard, see
e.g.,
Mio \cite[\S 3]{Mio2000} or
Bryant {\it et al.}  \cite[p. 444]{BFMW96}.
\end{proof} 

In order to prove Theorem \ref{B}, 
we have to show that  for
each $\varepsilon\leq\varepsilon_0,$
there exists
for 
every
${\mathcal M}(\varrho, n)$-like generalized manifold
$X^{n},$
 an $\varepsilon$-homotopy equivalence
$h_{\varepsilon}\colon M^n\to X^{n}$. This follows from Theorem~\ref{A} and  Ferry~\cite[Theorem~29.20]{Fe92}.

\begin{theorem}\label{F}
Let $\varrho\colon [0,R)\to[0, \infty)$ be a contractible function and let $Y$ and $ Z$ be any compact metric
 spaces. Then for every $\varepsilon>0,$
 there  exists $\delta>0$ such that if 
 $Y,Z\in {\mathcal M}(\varrho, n)$ and $d_G(Y,Z)<\delta,$ then $Y$ and $Z$ are 
 $\varepsilon$-homotopy equivalent.
 Here, $\delta=\delta(\varepsilon,\varrho)$ depends  on $\varepsilon$ and $\varrho$, but not  on $Y,Z$.
\end{theorem}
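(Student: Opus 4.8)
The plan is to make the Gromov--Hausdorff estimate concrete and then build mutually inverse controlled maps by hand. First I would use the definition of $d_G$ to embed both $Y$ and $Z$ isometrically into a single compact metric space $(W,d)$ with Hausdorff distance $<\delta$; thus every point of $Y$ lies within $\delta$ of $Z$ and vice versa. The goal is then to produce maps $f\colon Y\to Z$ and $g\colon Z\to Y$ which are close (measured in $W$) to the respective inclusions, and to show that $g\circ f$ and $f\circ g$ are $\varepsilon$-homotopic to the identities; this is exactly the assertion that $Y$ and $Z$ are $\varepsilon$-homotopy equivalent.

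The engine of the whole argument is a \emph{uniform controlled homotopy extension property} extracted from membership in $\mathcal{M}(\varrho,n)$. Concretely, I would first prove a lemma: there is a function $\mu(\cdot)=\mu(\cdot,\varrho,n)$ with $\mu(\alpha)\to 0$ as $\alpha\to 0$, such that for any simplicial complex $K$ of dimension $\leq n$ and any two maps $\phi_0,\phi_1\colon K\to Y$ with $d(\phi_0(x),\phi_1(x))<\alpha$ for all $x$, there is a homotopy from $\phi_0$ to $\phi_1$ moving each point a distance $<\mu(\alpha)$. The proof is by induction over the skeleta of $K$: on the $0$-skeleton the two images are within $\alpha$ and are joined by a short path using the contraction of $B_\alpha$ inside $B_{\varrho(\alpha)}$; extending the homotopy across a $k$-cell amounts to filling a map of a sphere that already lies in a controlled ball, which is possible because that ball contracts inside its $\varrho$-enlargement. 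The crucial point is that after at most $n$ inductive steps the relevant radius is bounded by the $n$-fold iterate $\varrho^{(n)}(\alpha)$, which still tends to $0$ with $\alpha$ since $n$ is fixed and $\varrho$ is continuous at $0$.

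With this lemma in hand I would construct $f$ and $g$ from nerves of fine covers. Choosing a cover of $Y$ by $r$-balls $\{B_r(y_i)\}$ together with a subordinate partition of unity yields a canonical map $Y\to|N|$ into the nerve, whose dimension is $\leq n$. For each vertex $y_i$ pick $z_i\in Z$ with $d(y_i,z_i)<\delta$; if $y_i,y_j$ span an edge then their balls meet, so $d(z_i,z_j)<2r+2\delta$, and more generally the vertices of any simplex are mutually close. The uniform extension lemma, applied now to $Z\in\mathcal{M}(\varrho,n)$, lets me extend the vertex assignment $y_i\mapsto z_i$ over successive skeleta of $|N|$, producing $|N|\to Z$ and hence $f\colon Y\to Z$ with $d\bigl(x,f(x)\bigr)$ bounded by a function of $r$ and $\delta$ in $W$. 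Symmetrically I build $g\colon Z\to Y$.

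Finally I would estimate the compositions. By construction $g\circ f\colon Y\to Y$ displaces each point by only $O(r+\delta)$ in $W$, so $g\circ f$ and $\mathrm{id}_Y$ are $\alpha$-close for a controlled $\alpha$; applying the uniform lemma to $Y$ shows they are $\mu(\alpha)$-homotopic, and likewise $f\circ g\simeq\mathrm{id}_Z$. Choosing $r$ small and then $\delta$ small enough forces $\mu(\alpha)<\varepsilon$, giving the $\varepsilon$-homotopy equivalence with $\delta=\delta(\varepsilon,\varrho,n)$ independent of $Y$ and $Z$. The main obstacle is precisely this uniformity: one must verify that every contraction is carried out inside a ball whose radius is a \emph{bounded} iterate of $\varrho$, so that all displacement bounds collapse to $0$ as the input scales do, with constants depending only on $n$, $\varrho$, and $\varepsilon$ and never on the individual spaces.
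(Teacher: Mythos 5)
The paper does not actually prove this statement: Theorem \ref{F} is quoted as a black box from Ferry's notes \cite[Theorem~29.20]{Fe92} and then combined with Theorem \ref{A} to deduce Theorem \ref{B}. So there is no in-paper proof to compare against; what you have written is a reconstruction of the standard argument behind the cited result (the Ferry/Grove--Petersen--Wu circle of ideas): realize $d_G(Y,Z)<\delta$ inside a common compact space $W$, push a fine nerve of one space into the other to get maps in both directions, and control the compositions by a uniform ``$\alpha$-close maps are $\mu(\alpha)$-homotopic'' lemma proved by skeletal induction using the contractions $B_r\subset B_{\varrho(r)}$. That is the right architecture, and the uniformity point you isolate --- every filling takes place in a ball whose radius is a bounded iterate of $\varrho$ applied to the input scale, so $\delta$ depends only on $\varepsilon$, $\varrho$, $n$ and not on $Y$, $Z$ --- is exactly the heart of the matter.

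Two steps, as written, need repair. First, the nerve of a cover of $Y$ by $r$-balls need \emph{not} have dimension $\leq n$: its dimension is the order of the cover minus one, which can be arbitrarily large. You must use $\dim Y\leq n$ to pass to an open refinement of order $\leq n+1$ (this is precisely the definition of covering dimension); the refined cover still has mesh $<2r$, which is all your estimates require. Second, your uniform homotopy lemma is stated for simplicial domains $K$, but in the final step you apply it with domain $Y$ itself, to homotope $g\circ f$ to $\mathrm{id}_Y$, and $Y$ is merely a compact metric space of dimension $\leq n$, not a complex. You need the lemma for such domains: the standard fix extends the partial homotopy over $Y\times[0,1]$, a space of dimension $\leq n+1$, by working through a nerve of a fine cover of $Y\times[0,1]$, so the induction runs one dimension higher and costs one more iterate of $\varrho$ --- harmless, since $\varrho$-contractibility fills spheres of every dimension, but it must be said (note that mere $LC^{n-1}$ or $LC^n$ control would not obviously suffice here without this observation). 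Alternatively, one can keep everything simplicial by comparing $\mathrm{id}_Y$ and $g\circ f$ through the canonical nerve map $Y\to|N|$. With these two repairs your sketch is a correct proof, of essentially the same kind as the source the paper cites.
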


Let us provide some more details: We equip generalized
 $n$-manifold 
  $X^{n}$
 with the metric given by an embedding
  $X^{n} \hookrightarrow \mathbb{R}^{m}$ 
   of 
   $X^{n}$
    into some 
    $\mathbb{R}^{m},$ 
for a sufficiently large 
$m \in \mathbb{N}$, 
see Theorem \ref{A} and Remark \ref{remark}.
 By Ferry~\cite[Theorem 29.14]{Fe92},
  $X^{n}$  with this metric belongs to 
   $\mathcal{M}(\varrho, n)$ 
   for some contractible function $\varrho\colon [0,R) \to [0, \infty)$.

By hypothesis, we can now choose a sequence $\{\varepsilon_i >0 \}_{i\in \mathbb{N}}$ such that
\[
\lim_{i \to +\infty} \varepsilon_i =  0, \quad  \sum_{i=1}^{\infty}\varepsilon_i < \infty,
\]
 and then invoking  Theorem~\ref{F},
 obtain a sequence 
 \[
 \{\delta_i:=\delta_i(\varepsilon_{i},\varrho)>0\}_{i\in \mathbb{N}}.
 \]

By  Theorem~\ref{A}, then there exists a sequence of 
 closed topological $n$-manifolds
$\{M^n_{\delta_{i}}\}_{i\in \mathbb{N}}\subset {\mathcal M}(\varrho, n),$
with respect to the metric obtained by 
embedding  
$M^n_{\delta_{i}} \hookrightarrow N_{X^{n}}^{m} \subset \mathbb{R}^{m}$ 
 each $M^n_{\delta_{i}}$ into  a small neighbourhood $N_{X^{n}}^{m}$ of generalized $n$-manifold $X^{n}$ in $\mathbb{R}^{m},$
such that
\[
 d_G(M^n_{\delta_{i}},X^{n})<\delta_i,
 \
 \hbox{
 for every
 }
 \
 i\in \mathbb{N}.
 \]
 
Therefore every topological $n$-manifold $M^n_{\delta_{i}}$ is
 $\varepsilon_i$-homotopy equivalent to $X^{n}$.
This proves Theorem~\ref{B}. \hfill $\Box$

\section*{Acknowledgements}
This research was supported by the Slovenian Research Agency grants P1-0292, J1-4031, J1-4001, N1-0278, N1-0114, and N1-0083. 
 We thank the referee for comments and suggestions.

\end{document}